\documentclass[letterpaper, 10 pt, conference]{ieeeconf}
\IEEEoverridecommandlockouts
\usepackage{cite}
\usepackage{amsmath,amssymb,amsfonts}
\usepackage{algorithm}
\usepackage{algorithmic}
\usepackage{graphicx}
\usepackage{hyperref}
\usepackage{textcomp}
\usepackage{booktabs}
\usepackage[dvipsnames]{xcolor}
\usepackage{dblfloatfix}
\usepackage{caption}

\newtheorem{definition}{Definition}[section]
\newtheorem{assumption}{Assumption}[section]
\newtheorem{proposition}{Proposition}[section]
\newtheorem{remark}{Remark}[section]

\newcommand{\R}{\mathbb{R}}
\newcommand{\Spp}{\mathbb{S}_{++}}
\newcommand{\Spsd}{\mathbb{S}_{+}}
\newcommand{\tr}{\operatorname{tr}}
\newcommand{\blkdiag}{\operatorname{blkdiag}}
\newcommand{\norm}[1]{\left\lVert#1\right\rVert}
\newcommand{\vecop}{\operatorname{vec}}

\title{\LARGE \bf
The Separation Principle and the Dual–Certainty Equivalence Gap in Model Predictive Control
}

\author{Tren Baltussen, Nathan P. Lawrence, Alexander Katriniok, Ali Mesbah, Maurice Heemels 
\thanks{T. Baltussen, A. Katriniok and M. Heemels are with the Control Systems Technology Section, Eindhoven University of Technology, the Netherlands. N.P. Lawrence and A. Mesbah are with the Department of Chemical and Biomolecular
Engineering, UC Berkeley, California, USA. Email:
{\tt t.m.j.t.baltussen@tue.nl}}}%

\begin{document}
\maketitle
\pagestyle{plain}

\begin{abstract}
Dual control addresses the trade-off between \text{exploitation} and exploration, where control inputs both regulate the system and generate informative data for estimation and identification. For certain problem classes, control and estimation can be designed independently without loss of optimality, a property known as the separation principle. 
However, in stochastic control problems with model uncertainty and constraints, this principle generally breaks down, and introduces the need for dual control.
In this paper, we propose an information-weighted dual model predictive control (MPC) formulation and introduce metrics that quantify the dependence of the MPC policy on the uncertainty. 
We focus on parametric uncertainty in linear systems with Gaussian noise, though the metrics can be applied more broadly.
Numerical results show that the dependence of the MPC policy on the posterior covariance is largest under high uncertainty and vanishes as the posterior covariance contracts, providing empirical evidence of the dual effect in closed loop. Moreover, the dual controller improves regulation performance and model accuracy compared to certainty-equivalent MPC.
\end{abstract}

\section{Introduction}
The separation principle is a cornerstone of stochastic control, where controller design and state estimation can be decoupled without loss of optimality. The classical linear--quadratic--Gaussian (LQG) setting with known dynamics is a prime example, yielding a Kalman filter for estimation and a linear-quadratic regulator (LQR) for control \cite{BarShalomTse1974}.
However, in the presence of model uncertainty and/or constraints, the separation principle typically breaks down, and the optimal control law depends on the state as well as its distribution, i.e., its uncertainty \cite{Feldbaum60,BarShalomTse1974}. In general, the optimal controller must balance regulation and exploration, trading off immediate performance and information gain to improve future control decisions. This coupling between control and uncertainty is known as the dual effect \cite{BarShalomTse1974}.

Model predictive control (MPC) provides a practical framework for constrained control \cite{Mayne00,RawlingsMayne17}, and dual MPC variants attempt to approximate the dual effect with tractable approximations \cite{Heirung17,arcari_dual_2020a,baltussen2025}.
Many such methods quantify and compare regulation and learning performance, but fewer works explicitly quantify the structural dependence between control and uncertainty as an empirically measurable object.
As MPC is amenable to accommodating dual control, it is important to understand the separation property in the context of dual MPC.
Since the MPC control law is typically computed via numerical optimization, the structural coupling between control and uncertainty is not immediately clear. We aim to make this coupling quantitatively observable by defining a \emph{separation gap} and a covariance sensitivity metric.

\subsection{Related Work}
Several works address dual control using MPC.
Various methods are based on or related to approximate dynamic programming.
For example, \cite{arcari_dual_2020a} and \cite{HUSHARP2024} use scenario trees in MPC to address the trade-off between exploration and exploitation. 
This exploration and exploitation trade-off is then \textit{implicitly} governed by the cost function \cite{mesbah_stochastic_2018}.
Alternatively, \cite{Heirung17} reformulates a single input, single output dual control problem by minimizing the output error at the next time step.
In \cite{baltussen2025}, the authors propose a dual MPC that uses Gaussian process regression that exploit the dual effect to actively learn nonparametric uncertainties online. In addition, they provide robust recursive feasibility guarantees in the presence of uncertainty.
Similar to the work in this paper, \cite{soloperto_dual_2019} proposes a dual MPC formulation for linear systems with parametric uncertainty, where the exploration is driven by a covariance-weighted stage cost. Under bounded uncertainty, they establish practical stability and recursive feasibility of the MPC.
For an introduction to dual control, we refer the interested reader to \cite{BarShalomTse1974}. For a comprehensive overview of dual MPC, we refer the reader to the survey \cite{Mesbah2016}.

\subsection{Contributions}
In this paper, we study an information-weighted dual MPC formulation that incorporates a covariance-dependent quadratic surrogate of the information gain into the stage cost, combined with Bayesian linear regression of the model parameters. In this setting, we aim to quantitatively characterize the separation property in dual MPC.
Our contribution is an analysis framework inspired by the separation property.
We define quantitative metrics that measure control--uncertainty coupling in dual MPC, and complement this with metrics that measure identification quality and closed-loop performance.
Our contributions are summarized as follows:
\begin{itemize}
\item We propose an information-weighted dual MPC.
\item We introduce two \emph{separation metrics}: separation gap $S_t$ and covariance sensitivity $G_t$, and use them to quantify the control--uncertainty coupling.
\item We analyze the dual MPC with these separation metrics in Monte Carlo simulations for a linear system with parametric uncertainty subject to Gaussian noise, and compare its closed-loop regulation and identification performance against a certainty-equivalent MPC.
\end{itemize}

\section{Preliminaries}
\label{seq:prelim}
We will first recall some fundamental concepts in stochastic control theory that are relevant to our analysis of dual MPC, including the certainty-equivalence property, the separation property, and the dual effect. These concepts primarily originate from the work of Bar-Shalom and Tse \cite{BarShalomTse1974}.
\subsection{Certainty Equivalence and the Separation Property}
For a control problem, it is said that the \textit{certainty equivalence} property holds if the optimal control law can be obtained by solving the deterministic optimal control problem with the expected state or parameter estimates, without accounting for uncertainty in the state measurement or the system dynamics.
Then, the optimal control law coincides with the deterministic one evaluated at the current expected state (mean) and depends only on available observations, without accounting for the impact of current actions on future information. The LQG problem with known dynamics and additive Gaussian noise is a classical example of this \cite{BarShalomTse1974}.

Although the certainty equivalence property is a strong structural condition that holds only for specific problem classes, a \emph{certainty-equivalent controller} is often used as a heuristic in stochastic control.
The certainty-equivalent (CE) controller is obtained by solving the deterministic optimal control problem and replacing the unknown states or parameters with their current expectation, as if it were certain, thereby ignoring uncertainty in the control design.
When the CE controller is truly an optimal solution to the optimal control problem, it is said that \textit{certainty equivalence property holds} \cite{BarShalomTse1974}.


The \textit{separation property} is a weaker property that states that the optimal control law depends on the current state estimate, and is invariant to the distribution (e.g., covariance) in the belief. 
In other words, the optimal control law is separation-invariant if it does not explicitly depend on the uncertainty state. When the separation property holds, the control design can be decoupled from estimation without loss of optimality, even if certainty equivalence does not hold.
However, unlike certainty equivalence, for the separation property to hold, the control law need not coincide with the deterministic optimal control law and may still depend on the current state estimate \cite{BarShalomTse1974}.


\subsection{Dual Effect}
When the separation property does not hold, the optimal control law depends on the uncertainty state, and the controller must balance regulation and exploration, a problem known as dual control \cite{Feldbaum60}.
In contrast to the separation property, an optimal dual controller explicitly accounts for the effect of current inputs on future information. Such a controller trades off immediate control performance with information gain that improves long-term performance \cite{BarShalomTse1974}.

\begin{definition}\label{def:dual_effect}
A controller exhibits a \emph{dual effect} if the chosen control action depends on the state or system uncertainty estimate and its distribution (e.g. covariance) such that it can influence future information through the belief update dynamics.
\end{definition}

As MPC is widely used to approximate optimal control laws in constrained stochastic optimal control problems, it is of interest to understand whether, and to what extent, this separation holds for dual MPC formulations, and how this relates to the dual effect.

\section{Problem setup and Bayesian belief update}
\subsection{Uncertain Linear Dynamics}
We consider the discrete-time stochastic linear system
\begin{equation}
x_{t+1}=A^\star x_t + B^\star u_t + w_t,\qquad w_t\sim\mathcal{N}(0,\sigma_w^2 I_n),
\label{eq:true_sys}
\end{equation}
where $x_t\in\R^n$ is the state, $u_t\in\R^m$ is the control input, $w_t\in\R^n$ is i.i.d.\ process noise with known variance $\sigma_w^2>0$, and $I_n$ denotes the $n\times n$ identity matrix.
The system matrices $A^\star\in\R^{n\times n}$ and $B^\star\in\R^{n\times m}$ are unknown and collected as
\begin{equation}
\Theta^\star=[A^\star\;B^\star]\in\R^{n\times(n+m)},\qquad \theta^\star=\vecop(\Theta^\star)\in\R^{n(n+m)},
\end{equation}
where $\vecop(\cdot)$ denotes column-wise vectorization.
Let $z_t=[x_t^\top\;u_t^\top]^\top\in\R^{n+m}$ denote the joint state--input vector and $\Phi_t=z_t^\top\otimes I_n\in\R^{n\times n(n+m)}$ the associated regression matrix, where $\otimes$ denotes the Kronecker product.
Then
\begin{equation}
x_{t+1}=\Phi_t \theta^\star + w_t.
\label{eq:regression}
\end{equation}
We aim to stabilize the system to the origin and consider a quadratic regulation cost
\begin{equation}
    \ell_{\mathrm{reg}}(x,u)= x^\top Q x + u^\top R u,
\label{eq:ell_reg}
\end{equation}
where $Q\in\Spp^n$ and $R\in\Spp^m$.
We denote by $\Spp^d$, $\Spsd^d$ the sets of $d\times d$ symmetric positive definite and positive semidefinite matrices, respectively.
We consider input constraints $u_t\in\mathcal{U}\subseteq\R^m$ and assume that the state is fully observed at each time step. State constraints are not considered in this work to focus on dual effect induced by the cost shaping. However, the proposed metrics and analysis framework can be applied to (soft- or chance-)constrained settings as well.

\subsection{Bayesian Linear Regression}
The parameter estimation is initialized with a Gaussian prior $\theta_0\sim\mathcal{N}(\hat\theta_0,\Sigma_0)$, where $\hat\theta_0$ is the prior mean and $\Sigma_0$ is the prior covariance.
We maintain a Gaussian posterior $\theta_t \sim\mathcal{N}(\hat\theta_t,\Sigma_t)$, where $\hat\theta_t\in\R^{n(n+m)}$ is the posterior mean and $\Sigma_t\in\Spsd^{n(n+m)}$ is the posterior covariance, and update it at every time step using the covariance-form recursion \cite{goodwin1984adaptive}
\begin{align}
N_t &= \sigma_{w}^2 I_n + \Phi_t\Sigma_t\Phi_t^\top,\label{eq:S}\\
K_t &= \Sigma_t \Phi_t^\top N_t^{-1},\label{eq:K}\\
\hat\theta_{t+1} &= \hat\theta_t + K_t(x_{t+1}-\Phi_t\hat\theta_t),\label{eq:theta_update}\\
\Sigma_{t+1} &= \Sigma_t - K_t\Phi_t\Sigma_t.\label{eq:Sigma_update}
\end{align}
where $N_t\in\R^{n\times n}$ is the innovation covariance, $K_t\in\R^{n(n+m)\times n}$ is the Kalman gain, and $\sigma_{w}^2>0$ is the assumed filter noise variance. Note that the $\hat{\theta}_{t+1}$ and $\Sigma_{t+1}$ depend on the current input $u_t$ through $\Phi_t$.
We select the current model estimate as the posterior mean:
\begin{equation}
\hat\Theta_t:=\mathrm{mat}(\hat\theta_t)=[\hat A_t\;\hat B_t],
\end{equation}
where $\mathrm{mat}(\cdot)$ denotes the inverse of $\vecop(\cdot)$, reshaping the parameter vector back into a matrix.

\section{Model Predictive Control}
Before formulating the separation metrics, we introduce three MPC variants.
Firstly, a certainty-equivalent MPC (CE-MPC) that ignores the posterior covariance and optimizes control inputs based on the current model estimate $\hat\theta_t$. The dual MPC with an information-weighted stage cost that explicitly depends on the posterior covariance $\Sigma_t$. Finally, we consider an oracle MPC with perfect system information.




%

\subsection{Certainty-Equivalent MPC}
Given the current model estimate $(\hat A_t,\hat B_t)$, the CE-MPC solves the following finite-horizon quadratic program (QP) over a prediction horizon of $N$ steps:
\begin{equation}
\begin{aligned}
\min_{\{u_{k|t}\}}~~&\sum_{k=0}^{N-1} x_{k|t}^\top Q x_{k|t}+u_{k|t}^\top R u_{k|t} +x_{N|t}^\top P x_{N|t}\\
\text{s.t.}~~&x_{0|t}=x_t,\quad x_{k+1|t}=\hat A_t x_{k|t}+\hat B_t u_{k|t},\\
& \quad u_{\min}\le u_{k|t}\le u_{\max},
\end{aligned}
\label{eq:ce_mpc}
\end{equation}
where $x_{k|t}\in\R^n$ and $u_{k|t}\in\R^m$ denote the predicted state and input at prediction step $k$ from current time $t$, and $u_{\min},u_{\max}\in\R^m$ define the input constraints.
The controller applies the first optimal input $u_t=u^\star_{0|t}$ in a receding-horizon fashion. In this paper we fix the terminal cost matrix $P=Q$.

\subsection{Approximate Information Gain}
We now extend the CE-MPC in \eqref{eq:ce_mpc} by explicitly rewarding informative control actions, incorporating the essential exploration--exploitation tradeoff.
A standard measure of information is the log-determinant of the Fisher information matrix \cite{cover2006elements},
\begin{equation}
\mathcal{I}(\Sigma) := \log \det (\Sigma^{-1}).
\end{equation}
The one-step information gain is defined as
\begin{equation}
\Delta \mathcal{I}(z_t,\Sigma_t)
=
\log \det(\Sigma_{t+1}^{-1})
-
\log \det(\Sigma_t^{-1}).
\label{eq:exact_info_gain}
\end{equation}

In information form, the covariance update \eqref{eq:Sigma_update} is given by
\begin{equation}
\Sigma_{t+1}^{-1}
=
\Sigma_t^{-1}
+
\tfrac{1}{\sigma_w^2}(z_t z_t^\top \otimes I_n).
\label{eq:information_update}
\end{equation}

Using \eqref{eq:information_update}, we obtain
\begin{equation}
\Delta \mathcal{I}(z_t,\Sigma_t)
=
\log\det\!\left(
I + \tfrac{1}{\sigma_w^2}\Sigma_t (z_t z_t^\top \otimes I_n)
\right).
\end{equation}

To preserve a quadratic MPC structure, we use the first-order approximation \cite{boyd2004convex}
\begin{equation}
\log\det(I+X) \approx \operatorname{tr}(X),
\end{equation}
which yields
\begin{equation}
\Delta \mathcal{I}(z_t,\Sigma_t)
\approx
\tfrac{1}{\sigma_w^2}\operatorname{tr}\!\big(\Sigma_t (z_t z_t^\top \otimes I_n)\big).
\label{eq:approx_info_gain}
\end{equation}

Since the right-hand side is quadratic in \(z_t\), there exists a unique matrix \(W(\Sigma_t)\in\mathbb{S}_+^{n+m}\) such that
\begin{equation}
z_t^\top W(\Sigma_t) z_t
=
\tfrac{1}{\sigma_w^2}\operatorname{tr}\!\big(\Sigma_t (z_t z_t^\top \otimes I_n)\big).
\label{eq:W_definition}
\end{equation}

Partitioning \(\Sigma_t\) into \((n+m)\times(n+m)\) blocks
\[ {\scriptsize
\Sigma_t =
\begin{bmatrix}
\Sigma_{11} & \cdots & \Sigma_{1,n+m}\\
\vdots & \ddots & \vdots\\
\Sigma_{n+m,1} & \cdots & \Sigma_{n+m,n+m} 
\end{bmatrix},
\qquad
\Sigma_{ij}\in\mathbb{R}^{n\times n},}
\]
the entries of \(W(\Sigma_t)\) are given by
\begin{equation}
    \label{eq:W}
[W(\Sigma_t)]_{ij}
=
\tfrac{1}{\sigma_w^2}\operatorname{tr}(\Sigma_{ij}).
\vspace{-1mm}
\end{equation}
This yields a quadratic approximation of the information gain that can be embedded directly into the MPC stage cost.

\subsection{Information-Weighted Dual Stage Cost}
We formulate an information-weighted stage cost by augmenting the regulation cost $\ell_{\mathrm{reg}}$ in \eqref{eq:ell_reg} with a covariance-dependent quadratic term.
Based on first-order approximations of the $\log\det$ information gain in \eqref{eq:approx_info_gain}, we define
\begin{equation}
\ell_{\mathrm{dual}}(x,u,\Sigma)=x^\top Q x + u^\top R u - \alpha\, z^\top W(\Sigma)z,
\label{eq:ldual}
\end{equation}
where $z^\top:=\begin{bmatrix} x^\top & u^\top \end{bmatrix}$, and $\alpha\ge 0$ is a scalar exploration weight that trades off regulation against information gain.
We define the information-weighted stage-cost matrix as
\begin{equation}
L(\Sigma) :=\blkdiag(Q,R)-\alpha W(\Sigma)\in \mathbb{S}_+^{n+m}.
\label{eq:L}
\end{equation}
Then, the stage cost can be written as
\begin{equation}
 \ell_{\mathrm{dual}}(x,u,\Sigma)=z^\top L(\Sigma) z.
\end{equation}

\subsection{Information-Weighted Dual MPC Formulation}
The information-weighted dual MPC uses the current belief $(\hat\theta_t,\Sigma_t)$ to shape the stage cost. We fix $\Sigma_t$ over the prediction horizon to isolate the effect of the current covariance on the MPC. The resulting QP reads as follows:
\begin{equation}
\begin{aligned}
\min_{\{u_{k|t}\}}~~&\sum_{k=0}^{N-1}z_{k|t}^\top L(\Sigma_t)z_{k|t}+x_{N|t}^\top P x_{N|t}\\
\text{s.t.}~~&x_{0|t}=x_t,\quad x_{k+1|t}=\hat A_t x_{k|t}+\hat B_t u_{k|t},\\
&u_{\min}\le u_{k|t}\le u_{\max}.
\end{aligned}
\label{eq:dual_mpc}
\end{equation}
We require $L(\Sigma_t)\succ 0$ to ensure strict convexity of the resulting QP; this can always be achieved by choosing $\alpha$ sufficiently small with respect to $(Q,R)$, $W(\Sigma)$ and the prior covariance $\Sigma_0$, assuming that $\Sigma_{t+1} \preceq \Sigma_t$ for all $t$.
After applying $u_t$, the belief is updated via \eqref{eq:theta_update}--\eqref{eq:Sigma_update}.
This creates an explicit dependence of $u_t$ on $\Sigma_t$, and of $\Sigma_{t+1}$ on $u_t$.

\begin{remark}
Note that this information-weighted dual MPC does not explicitly propagate the covariance over the horizon, but rather uses the current $\Sigma_t$ to shape the stage cost at all prediction steps, following Definition \ref{def:dual_effect}. Therefore, this MPC does not feature the dual effect as formally defined in \cite{BarShalomTse1974}, which requires that the control input can influence future information through the belief update dynamics over the MPC horizon, causally anticipating the future covariance \cite{baltussen2025}. This  is done, for example, in \textit{wide-sense control} \cite{TseBarShalom1973}. For the sake of simplicity, we confine our analysis to this static covariance shaping, which already induces a structural coupling between control and uncertainty.
Still, the explicit dependence of the stage cost on $\Sigma_t$ creates a structural coupling between control and uncertainty that can be observed through the separation metrics defined in the next section.
\end{remark}

\subsection{Oracle MPC}
For the purpose of evaluating the separation and validation metrics, we also consider an oracle MPC which solves \eqref{eq:ce_mpc} with the true system parameters $\theta^\star$, i.e. $u_t^{\mathrm{orc}}:=\pi_{\mathrm{CE}}(x_t,\theta^\star)$.

\section{Separation and Validation Metrics}
\subsection{Separation gap}
\label{sec:sep_gap}
In order to quantify the effect of including the information-weighted cost term on the control law, we define the separation gap as the Euclidean distance between the dual control input and the certainty-equivalent input at the same \emph{belief state} $(x_t,\hat\theta_t,\Sigma_t)$.
Let us denote the first MPC inputs from \eqref{eq:ce_mpc} and \eqref{eq:dual_mpc}, solved with the same $(x_t,\hat\theta_t,\Sigma_t)$, as
\begin{equation}
u_t^{\mathrm{CE}}:=\pi_{\mathrm{CE}}(x_t,\hat\theta_t),
\qquad
u_t^{\mathrm{dual}}:=\pi_{\mathrm{dual}}(x_t,\hat\theta_t,\Sigma_t).
\label{eq:shadow_controls}
\end{equation}
We define the separation gap as follows:
\begin{equation}
S_t := \norm{u_t^{\mathrm{dual}}-u_t^{\mathrm{CE}}}_2,
\label{eq:St}
\end{equation}
where $u_t^{\mathrm{CE}}$ and $u_t^{\mathrm{dual}}$ are the first MPC inputs from \eqref{eq:ce_mpc} and \eqref{eq:dual_mpc}, respectively, and, $\norm{\cdot}_2$ denotes the Euclidean norm. In order to isolate the structural effect of covariance on the control law, we perform experiments with the oracle MPC, and we evaluate both the CE and dual MPC problems, at the same belief state $(x_t,\hat\theta_t,\Sigma_t)$, as defined in \eqref{eq:shadow_controls}.
This \emph{shadow} approach yields paired inputs from which we compute the separation metrics and additional validation quantities.

\paragraph*{Interpretation}
$S_t$ is a direct measure of the dependence on the posterior covariance. Namely, $S_t$ indicates how strong $\Sigma_t$ changes the certainty-equivalent control law. Conversely, if the dependence vanishes, then $u_t^{\mathrm{dual}}=u_t^{\mathrm{CE}}$ and $S_t=0$.

\subsection{Covariance sensitivity}
We define a finite-difference approximation of the sensitivity of the dual control law to the posterior covariance:
\begin{equation}
G_t :=
\frac{\norm{\pi_{\mathrm{dual}}(x_t,\hat\theta_t,(1+\varepsilon)\Sigma_t)
-\pi_{\mathrm{dual}}(x_t,\hat\theta_t,\Sigma_t)}_2}
{\varepsilon \norm{\Sigma_t}_F },
\label{eq:Gt}
\end{equation}
where $\varepsilon>0$ is a small perturbation parameter and $\norm{\cdot}_F$ denotes the Frobenius norm.
The normalization ensures that $G_t$ is comparable across time steps and simulation runs. In our experiments, we take $\varepsilon=0.05$.

\paragraph*{Interpretation}
$G_t$ captures the local dependence of the control law on the magnitude of parametric uncertainty.
When the QP optimizer is unique and differentiable with respect to the problem data, $G_t$ approximates a directional derivative of $\pi^{\mathrm{dual}}$ along the ray $\Sigma\mapsto (1+\varepsilon)\Sigma$.

\begin{remark}
Note that the separation gap $S_t$ is a property of the MPC policy and does not require the MPC to be optimal. Rather, it is inspired by the separation principle, but it can be evaluated for any control law. In particular, it can be of interest for quantify and understand how dual MPC control laws behave under, and depend on the uncertainty.
\end{remark}

\subsection{Validation Metrics}
In addition to the separation-theoretic metrics above, we record two validation quantities that quantify identification quality and control performance.

\paragraph*{Model Error}
We measure the global model error using the Frobenius norm of the parameter estimation error:
\begin{equation}E_t^{\mathrm{par}} := \norm{\hat\Theta_t - \Theta^\star}_F.
\label{eq:Ecr}
\end{equation}

\paragraph*{Oracle Mismatch}
Let $u_t^{\mathrm{orc}}:=\pi_{\mathrm{CE}}(x_t,\theta^\star)$ denote oracle MPC. Then, the \emph{oracle mismatch}
\begin{equation}
M_t^{\mathrm{orc}} := \norm{u_t - u_t^{\mathrm{orc}}}_2
\label{eq:Morc}
\end{equation}
measures the deviation of the actually applied input $u_t$ from the input that a fully informed CE controller would select.
By construction, $M_t^{\mathrm{orc}}=0$ for $u_t=u_t^{\mathrm{orc}}$.

\subsection{When Does Separation Hold?}
In the context of our information-weighted dual MPC, we can establish structural conditions under which this separation gap is positive, and characterize the sensitivity of the MPC control law on the belief state.

\begin{assumption}\label{ass:standing}
Throughout, we assume:
(i) $(A,B)$ is stabilizable and $(A,Q^{1/2})$ is detectable;
(ii) $Q\succ0$, $R\succ0$, and $P\succ0$;
(iii) $L(\Sigma_t)\succ0$ for all admissible $\Sigma_t$;
(iv) the QPs \eqref{eq:ce_mpc}--\eqref{eq:dual_mpc} are feasible and admit unique minimizers.
\end{assumption}

\begin{assumption}[Local smoothness for sensitivity]\label{ass:smooth}
For each time $t$, the linear independence constraint qualification (LICQ) holds at the optimizer of \eqref{eq:dual_mpc}.
\end{assumption}


\begin{proposition}[Lack of separation]\label{prop:break}
Fix a time $t$ and suppose $\alpha>0$ and $W(\Sigma_t)\neq 0$.
Then the objective in \eqref{eq:dual_mpc} depends explicitly on $\Sigma_t$ through $L(\Sigma_t)$, and hence the dual control law is, in general, covariance-dependent.
In particular, if the covariance-dependent term changes the optimizer relative to \eqref{eq:ce_mpc}, then $u_t^{\mathrm{dual}}\neq u_t^{\mathrm{CE}}$ and $S_t>0$.
\end{proposition}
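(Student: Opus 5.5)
The argument has three parts. First we show that the objective of \eqref{eq:dual_mpc} genuinely depends on $\Sigma_t$. Then we explain why this makes the control law covariance-dependent in general. Finally we get $S_t>0$ from the uniqueness of minimizers in Assumption~\ref{ass:standing}(iv).

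\textbf{Explicit dependence of the objective.} We eliminate the states using the prediction model $x_{k+1|t}=\hat A_t x_{k|t}+\hat B_t u_{k|t}$. Each $z_{k|t}$ is then an affine function of the stacked input $\mathbf{u}=(u_{0|t},\dots,u_{N-1|t})$ and of $x_t$. Using the definition \eqref{eq:L} of $L(\Sigma_t)$, the dual objective becomes
\begin{equation*}
J_{\mathrm{dual}}(\mathbf{u};\Sigma_t)=J_{\mathrm{CE}}(\mathbf{u})-\alpha\sum_{k=0}^{N-1} z_{k|t}^\top W(\Sigma_t) z_{k|t},
\end{equation*}
where $J_{\mathrm{CE}}$ is the objective of \eqref{eq:ce_mpc}. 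The map $\Sigma\mapsto W(\Sigma)$ in \eqref{eq:W} is linear and block-trace based. Since $W(\Sigma_t)\neq 0$ and $\alpha>0$, the correction term is a nonzero quadratic form in $z$. We also need it to be nonzero as a function of $\mathbf{u}$ on the feasible set. For this it suffices that some $z_{k|t}$ ranges over an open set. This holds for $z_{0|t}=(x_t,u_{0|t})$ whenever the input box has nonempty interior, and a nonzero quadratic form cannot vanish identically on an open set of $z$. Hence $J_{\mathrm{dual}}(\cdot;\Sigma_t)\neq J_{\mathrm{CE}}$, and the objective depends on $\Sigma_t$ through $L(\Sigma_t)$. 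This is the first claim.

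\textbf{Covariance dependence of the control law ("in general").} A different objective does not by itself force a different minimizer, which is why the proposition is hedged. We would justify the hedge with the optimality conditions. By Assumption~\ref{ass:standing}(iii) the QP is strictly convex, and under Assumption~\ref{ass:smooth} the KKT system characterizes the unique minimizer. The stationarity condition contains the term $-2\alpha\,\nabla_{\mathbf{u}}\sum_k z_{k|t}^\top W(\Sigma_t) z_{k|t}$, which is linear in $\Sigma_t$. So the optimizer is generically perturbed. For instance, in the unconstrained case the minimizer is $\mathbf{u}^\star=-H(\Sigma_t)^{-1}F(\Sigma_t)x_t$, and this changes with $\Sigma_t$ unless a nongeneric cancellation occurs. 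We state this only as a genericity remark, not as a theorem.

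\textbf{Positive separation gap.} Suppose the covariance term changes the optimizer, meaning the unique minimizer $\mathbf{u}^{\mathrm{dual}}$ of \eqref{eq:dual_mpc} differs from the unique minimizer $\mathbf{u}^{\mathrm{CE}}$ of \eqref{eq:ce_mpc}. The step I expect to be the main obstacle is passing from a different optimal sequence to a different first input. A changed sequence need not have a changed first element. So we read the hypothesis of the proposition as concerning the first input $u^\star_{0|t}$, which is what the receding-horizon law applies. Under that reading $u_t^{\mathrm{dual}}\neq u_t^{\mathrm{CE}}$, and since the Euclidean norm vanishes only at zero, \eqref{eq:St} gives $S_t>0$.

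If one wants to work with the whole sequence instead, the fallback is a dynamic-programming argument. Once $u_{0|t}$ is fixed, the tails of both problems are strictly convex QPs from the same $x_{1|t}$ with different stage costs. Equal first inputs therefore do not contradict different sequences. This is why the proposition is stated under the hypothesis on the applied input rather than derived unconditionally.
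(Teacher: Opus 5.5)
Your handling of what the proposition actually needs matches the paper's proof. The objective depends on $\Sigma_t$ through $L(\Sigma_t)=\blkdiag(Q,R)-\alpha W(\Sigma_t)$. Covariance dependence of the optimizer is asserted only ``in general''. And $S_t>0$ follows because the Euclidean norm of a nonzero vector is positive. Your remark that a changed optimal sequence need not change its first element, so the hypothesis must be read as a statement about $u^\star_{0|t}$, is a real clarification. The paper's one-line proof silently treats ``changes the optimizer'' and $u_t^{\mathrm{dual}}\neq u_t^{\mathrm{CE}}$ as the same thing.

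There is, however, a false step in your first paragraph, where you try to prove the stronger claim $J_{\mathrm{dual}}(\cdot;\Sigma_t)\neq J_{\mathrm{CE}}$ as functions of $\mathbf{u}$. Because $x_t$ is fixed, $z_{0|t}=(x_t,u_{0|t})$ ranges only over the slice $\{x_t\}\times\mathcal{U}$. That slice has empty interior in $\R^{n+m}$, so the fact that a nonzero quadratic form cannot vanish on an open set does not apply.

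The stronger claim can in fact fail under the stated hypotheses, since $\Sigma_t$ is only required to lie in $\Spsd$. Suppose the only nonzero block of $\Sigma_t$ is $\Sigma_{11}$, i.e.\ uncertainty only in the first column of $A$. Then $W(\Sigma_t)=\sigma_w^{-2}\tr(\Sigma_{11})\,e_1e_1^\top\neq 0$, with $e_1$ the first unit vector of $\R^{n+m}$, and $z^\top W(\Sigma_t)z=\sigma_w^{-2}\tr(\Sigma_{11})\,(e_1^\top z)^2$. Take $N=1$ and an $x_t$ with zero first component, and shrink $\alpha$ or $\Sigma_{11}$ so that $L(\Sigma_t)\succ0$. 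The correction term then vanishes for every $\mathbf{u}$, so the two objectives coincide identically even though $\alpha>0$ and $W(\Sigma_t)\neq0$.

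You do not need this step. The proposition only claims explicit dependence through $L(\Sigma_t)$, which is immediate from \eqref{eq:L} and is all the paper's proof uses, so you can simply drop it. If you want the stronger version, argue from $W(\Sigma_t)\succeq 0$ together with a nonzero input block of $W(\Sigma_t)$. That block is automatically nonzero when $\Sigma_t\succ0$, because then $W(\Sigma_t)\succ0$, and this holds along the recursion started from $\Sigma_0\succ0$. With it, the $k=0$ term is a non-constant function of $u_{0|t}$ on the interior of the input box.
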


\begin{proof}
Since $\alpha > 0$ and $W(\Sigma_t) \neq 0$, the objective
in \eqref{eq:dual_mpc} depends explicitly on $\Sigma_t$ through $L(\Sigma_t)$.
Hence, the optimizer $u_t^{\mathrm{dual}}$ is, in general, a function of $\Sigma_t$,
which shows dependence on $\Sigma_t$. 
If the covariance-dependent term changes the optimizer relative to
\eqref{eq:ce_mpc}, then $u_t^{\mathrm{dual}} \neq u_t^{\mathrm{CE}}$, which implies
$S_t > 0$.
\end{proof}


\begin{proposition}[Dependence on covariance]\label{prop:lipschitz}
Under Assumptions~\ref{ass:standing} and \ref{ass:smooth}, and provided the active set of
\eqref{eq:dual_mpc} remains unchanged in a neighborhood of $\Sigma_t$, the finite-difference quantity $G_t$ in \eqref{eq:Gt} converges to the
directional derivative of this mapping along $\Sigma \mapsto (1+\varepsilon)\Sigma$
as $\varepsilon\to 0$, whenever this directional derivative exists.
\end{proposition}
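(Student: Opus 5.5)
The plan is to reduce the statement to a fixed-active-set sensitivity argument for a strictly convex QP, and then to the definition of a one-sided derivative. First, I will make the dependence on $\Sigma$ explicit. By \eqref{eq:W}, $W(\Sigma)$ is linear in $\Sigma$, so $L(\Sigma)=\blkdiag(Q,R)-\alpha W(\Sigma)$ is affine in $\Sigma$. Along the ray $\Sigma(s)=(1+s)\Sigma_t$ we get $L(\Sigma(s))=L(\Sigma_t)-s\,\alpha W(\Sigma_t)$, which is affine in the scalar $s$. By Assumption~\ref{ass:standing}(iii) and continuity, $L(\Sigma(s))\succ0$ for $|s|$ small. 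Eliminating the states through $x_{k|t}$, which depend linearly on $x_t$ and the inputs via $(\hat A_t,\hat B_t)$, turns \eqref{eq:dual_mpc} into a QP in the stacked input $\mathbf u$:
\begin{equation*}
\min_{\mathbf u}\ \tfrac12\mathbf u^\top H(s)\mathbf u+ f(s)^\top\mathbf u \quad \text{s.t.}\quad C\mathbf u\le d.
\end{equation*}
Here $H(s)\succ0$ and $f(s)$ are affine in $s$, and the constraint data $C,d$ do not depend on $s$.

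Second, I will fix the active set $\mathcal A$, which by hypothesis is constant for $s$ near $0$. Then the KKT conditions become the linear system
\begin{equation*}
\begin{bmatrix} H(s) & C_{\mathcal A}^\top\\ C_{\mathcal A} & 0\end{bmatrix}\begin{bmatrix}\mathbf u\\ \lambda_{\mathcal A}\end{bmatrix}=\begin{bmatrix}-f(s)\\ d_{\mathcal A}\end{bmatrix}.
\end{equation*}
Since $H(s)\succ0$ and, by LICQ (Assumption~\ref{ass:smooth}), $C_{\mathcal A}$ has full row rank, the KKT matrix is nonsingular. Its entries are affine in $s$, so by Cramer's rule $s\mapsto(\mathbf u^\star(s),\lambda^\star(s))$ is a rational function with nonvanishing denominator near $0$, and hence $C^\infty$. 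Uniqueness in Assumption~\ref{ass:standing}(iv) guarantees that this KKT solution is the optimizer. The first block $u^\star_{0|t}(s)=\pi_{\mathrm{dual}}(x_t,\hat\theta_t,(1+s)\Sigma_t)$ is therefore differentiable at $s=0$, and its derivative $D$ is obtained by differentiating the KKT system.

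Third, I will conclude. Writing $\pi(s)$ for the first input, differentiability gives $\pi(\varepsilon)-\pi(0)=\varepsilon D+o(\varepsilon)$. Hence
\begin{equation*}
G_t=\frac{\norm{D+o(1)}_2}{\norm{\Sigma_t}_F}\;\to\;\frac{\norm{D}_2}{\norm{\Sigma_t}_F}\quad\text{as }\varepsilon\to0,
\end{equation*}
using continuity of the norm. This limit is the normalized norm of the directional derivative along the ray; I will interpret the statement's ``directional derivative'' in this normalized-norm sense. I will assume $\Sigma_t\neq0$ so that $G_t$ is well defined.

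The main obstacle is justifying that the constant-active-set hypothesis really makes the KKT system the correct characterization of the optimizer for all small $s$. That is, primal feasibility of the inactive constraints and nonnegativity of the multipliers must persist. I will take this as the content of the hypothesis, backed by continuity of the solution map. If that hypothesis proves insufficient, I would instead invoke standard QP sensitivity results such as Robinson or Fiacco, which give directional differentiability under LICQ and strong second-order sufficiency; these hold here because $H\succ0$.
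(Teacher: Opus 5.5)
Your proposal is correct and takes the same route as the paper's proof. With the active set frozen and LICQ holding, the optimizer solves a nonsingular equality-constrained KKT system whose data depend smoothly on $\Sigma$. Hence the first input is differentiable along $s\mapsto(1+s)\Sigma_t$, and $G_t$ converges to the normalized norm of that directional derivative.

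Your version is tighter than the paper's in two places. The paper calls the minimizer affine in the QP coefficients and then infers differentiability from local Lipschitz continuity; since $\Sigma$ enters the Hessian, your Cramer's-rule argument is what actually gives rational, hence $C^\infty$, dependence. The ``obstacle'' you flag is also automatic: the true optimizer with its multipliers solves the reduced KKT system, so it must equal that system's unique solution.
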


\begin{proof}
With a constant active set and LICQ, the minimizer is an affine function of the QP coefficients in a neighborhood of $\Sigma_t$.
As the QP coefficients depend smoothly on $\Sigma$, the mapping $\Sigma \mapsto u^{\mathrm{dual}}(x_t,\hat\theta_t,\Sigma)$ is locally Lipschitz around $\Sigma_t$.
Therefore, $\Sigma\mapsto u^{\mathrm{dual}}$ is locally Lipschitz and differentiable in a neighborhood of $\Sigma_t$; the finite difference in \eqref{eq:Gt} estimates the corresponding directional derivative.
\end{proof}

\begin{remark}[Connection to the dual effect]
Propositions~\ref{prop:break}--\ref{prop:lipschitz} formalize the mechanism by which our dual MPC breaks separation: $\Sigma_t$ enters the control law explicitly via cost shaping.
This structural coupling is precisely is absent in certainty-equivalent designs.
\end{remark}

\section{Numerical results}
\subsection{Problem Setup}
We consider a discrete-time double integrator with sampling time $T_s=0.1$:
\begin{equation}
A^\star=\begin{bmatrix}1&T_s\\0&1\end{bmatrix},\qquad
B^\star=\begin{bmatrix}0.5T_s^2\\T_s\end{bmatrix},
\end{equation}
which yields the following true dynamics:
\begin{equation}x_{t+1}=\begin{bmatrix}1&0.1\\0&1\end{bmatrix}x_t+\begin{bmatrix}0.005\\0.1\end{bmatrix}u_t+w_t.\end{equation}

The covariance of the Gaussian noise are $\sigma_{w}^2=5\cdot10^{-4}$.
We set $Q=\mathrm{diag}(10,1)$, $R=1$ and $P=Q$.
We set $N=3$ and simulate $n_{\mathrm{sim}}=100$ time steps from $x_0=[0.4 \;0.1]^\top$.
The input is constrained to $u\in[-10,10]$.

\begin{algorithm}[t]
\caption{Closed-Loop Evaluation of Dual and CE-MPC}\label{alg:sep}
\begin{algorithmic}[1]
\STATE \textbf{Given:} belief $(\hat\theta_t,\Sigma_t)$, state $x_t$, horizon $N$, cost matrices $(Q,R,P)$, parameters ($\alpha,\varepsilon$)
\STATE Extract $(\hat A_t,\hat B_t)=\mathrm{mat}(\hat\theta_t)$
\STATE \textbf{CE shadow:} solve \eqref{eq:ce_mpc} to obtain $u_t^{\mathrm{CE}}$
\STATE Compute $W(\Sigma_t)$ via \eqref{eq:W} and form $L(\Sigma_t)$ via \eqref{eq:L}
\STATE \textbf{Dual shadow:} solve \eqref{eq:dual_mpc} to obtain $u_t^{\mathrm{dual}}$
\STATE Compute $S_t$ via \eqref{eq:St} and $G_t$ via \eqref{eq:Gt}
\STATE \textbf{Apply:} select $u_t$ (dual, CE, or oracle); propagate \eqref{eq:true_sys} to $x_{t+1}$
\STATE Compute metrics $E_t^{\mathrm{par}}$, $M_t^{\mathrm{orc}}$ via \eqref{eq:Ecr}--\eqref{eq:Morc}
\STATE \textbf{Belief update:} update $(\hat\theta_{t+1},\Sigma_{t+1})$ via \eqref{eq:theta_update}--\eqref{eq:Sigma_update}
\end{algorithmic}
\end{algorithm}

\paragraph*{Initialization}
We parameterize $\Theta^\star=[A^\star\;B^\star]$ and initialize a biased mean
\begin{equation}
\hat A_0=A^\star+\begin{bmatrix}0.5&0.5\\0&0.25\end{bmatrix},\qquad
\hat B_0=B^\star+\begin{bmatrix}0.1\\0.25\end{bmatrix},
\end{equation}
with $\Sigma_0=1I_6$.
The exploration weight is set to $\alpha=1$ and $\varepsilon=0.05$ for the finite-difference sensitivity \eqref{eq:Gt}.

\paragraph*{Monte Carlo Experiments}
We perform 20 Monte Carlo episodes with independent noise realizations that are identical across methods and compare:
(i) dual MPC $u_t^{\mathrm{dual}}(x_t, \hat{\theta}_t, \Sigma_t)$, (ii) CE-MPC $u_t^{\mathrm{CE}}(x_t, \hat{\theta}_t)$, (iii) oracle MPC $u_t^{\mathrm{orc}}(x_t, \theta^\star)$.
We record:
(a)~the cumulative regulation cost $J^{\mathrm{reg}}_t = \sum_{i=0}^{t}\ell_{\mathrm{reg}(x_i, u_i)}$,
(b)~posterior uncertainty measured by the covariance trace $\tr(\Sigma_t)$,
(c)~separation metrics $S_t$ and $G_t$, and
(d)~validation metrics $E_t^{\mathrm{par}}$ and $M_t^{\mathrm{orc}}$.
Algorithm~\ref{alg:sep} summarizes the closed-loop procedure, including the shadow solves used to compute separation and validation metrics at each time step, detailed in Section~\ref{sec:sep_gap}.

\paragraph*{Post-Learning Evaluation}
After the learning phase, we perform post-learning Monte Carlo evaluations where both CE and dual MPC use the same cost function ($\alpha=0$) and fully exploit their identified model in a certainty-equivalent manner. In these simulations, the model is not updated anymore, and the controllers operate with the final model estimates obtained from their respective learning phases, isolating the long-term benefits of the dual effect during the learning phase on the subsequent exploitation performance.

\subsection{Discussion}
The Monte Carlo results are shown in Fig. \ref{fig:mc} and Fig. \ref{fig:sep_time}.
The initial excitation of the dual MPC is dominated by the large covariance trace. After initial excitation, the covariance reduces and the separation gap reduces. As the covariance reduces further, the sensitivity to the covariance increases again, which is followed by an increased separation gap (around $t=4$ seconds).
It is expected that this induces more exploration, leading to further reduction in posterior covariance. Meanwhile, the CE-MPC converges slower than the dual MPC and settles to a fixed band.
The separation metrics reveal \emph{how} this happens structurally: $S_t$ and $G_t$ are largest when $\Sigma_t$ is large and diminish as $\Sigma_t$ contracts.
In other words, the dual effect becomes directly visible as a time-varying dependence of the control law on covariance of the system parameters.
It is important to note, that in this problem setting, there is a strong correlation between the separation gap and the covariance trace. However, the separation gap is a general metric that extends beyond this problem setting and can be used to quantify the dependence on the uncertainty distribution in more general settings.

\begin{figure}[t]
\centering
\includegraphics[width=1\linewidth]{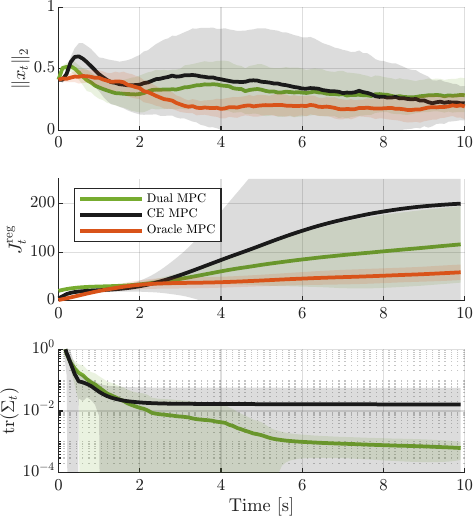}
\caption{The Monte Carlo experiments of dual MPC, CE-MPC, and oracle MPC. Here, the solid lines indicate the mean and the shaded areas indicate one standard deviation.}
\label{fig:mc}
\vspace{-4mm}
\end{figure}

\begin{figure}[t]
\centering
\includegraphics[width=1\linewidth]{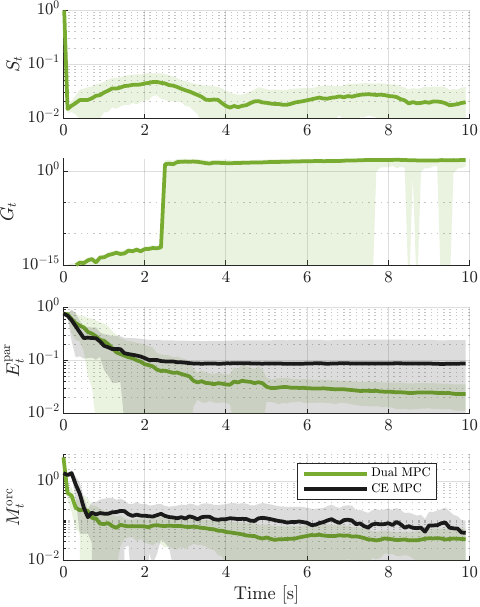}
\caption{The separation gap $S_t$, covariance sensitivity $G_t$, model error $E_t^{\mathrm{par}}$ and oracle mismatch $M_t^{\mathrm{orc}}$ in Monte Carlo experiments.}
\label{fig:sep_time}
\vspace{-4mm}
\end{figure}

Although the initial excitation of the dual MPC leads to higher regulation cost than CE-MPC (up to $t=2$ seconds), it also leads to faster reduction in model error and posterior uncertainty, which in turn leads to better control performance during the exploitation phase (after $t=4$ seconds).
The dual MPC attains a lower cumulative regulation cost than CE-MPC.
The dual MPC reduces parameter uncertainty and model error faster than CE-MPC, consistent with the intended excitation effect of the information-weighted cost. Overall, the dual MPC achieves lower regulation cost and oracle mismatch. Note that we can observe a correlation between the separation gap and the oracle mismatch of the dual MPC. This suggests that the separation gap is indicative of certainty equivalence principle, discussed in Section \ref{seq:prelim}.

The results of the post-learning evaluation experiment are shown in Fig. \ref{fig:alpha}. Here, both CE and dual MPC have the same cost function (i.e., $\alpha=0$) and operate in a certainty-equivalent manner, but they use different model estimates obtained from their respective learning phases. The post-learning evaluation confirms that the identified model under dual MPC is more accurate and leads to better closed-loop performance even when both controllers use the same cost function and operate in a certainty-equivalent manner, highlighting the long-term benefits of the dual effect during the learning phase. It is expected that this originates from the control-relevant excitation induced by the information-weighted cost, which is reflected in the separation metrics.

\section{Conclusion}
In this work, we studied an information-weighted dual MPC formulation through the lens of the separation principle. 
We introduced the \emph{separation gap} as a metric to empirically quantify the dependence of the (MPC) control law on the posterior uncertainty, and we defined a finite-difference approximation of the covariance sensitivity to capture the local dependence of the control law on the magnitude of the posterior uncertainty.
Numerical results show this dependence is most pronounced under high posterior uncertainty and diminishes as the belief concentrates, offering an empirical bridge between classical dual-effect theory and modern dual MPC formulations. Moreover, post-learning evaluations show that dual MPC can improve model estimates and closed-loop performance, even when both controllers subsequently operate in a certainty-equivalent manner, showing the potential benefits of dual control.

\begin{figure}[t]
\centering
\includegraphics[width=1\linewidth]{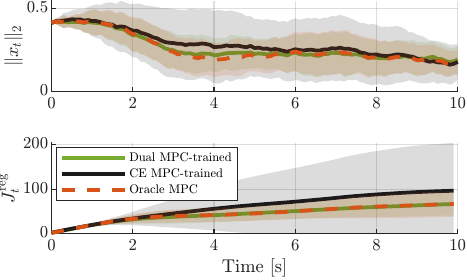}
\caption{Post-learning Monte Carlo evaluation. Note $\alpha = 0$.}
\label{fig:alpha}
\vspace{-4mm}
\end{figure}

Future work will explore the impact of updating the belief across the horizon, similar to the wide-sense control formulation in \cite{TseBarShalom1973}, which would more directly connect to the classical dual effect as defined in \cite{BarShalomTse1974}. In addition, we will study the impact of multi-step information propagation on the separation gap and assess the resulting trade-offs between performance and computational complexity.

\bibliographystyle{IEEEtran}
\bibliography{Ref}

@article{Feldbaum60,
  author  = {Feldbaum, A. A.},
  title   = {Dual Control Theory. {I}},
  journal = {Automation and Remote Control},
  year    = {1960},
  note    = {English translation of original work on dual control}
}

@article{Mayne00,
  author  = {Mayne, David Q. and Rawlings, James B. and Rao, Christopher V. and Scokaert, Pierre O. M.},
  title   = {Constrained Model Predictive Control: Stability and Optimality},
  journal = {Automatica},
  volume  = {36},
  number  = {6},
  pages   = {789--814},
  year    = {2000},
  doi     = {10.1016/S0005-1098(99)00214-9}
}

@book{RawlingsMayne17,
  author    = {Rawlings, James B. and Mayne, David Q. and Diehl, Moritz},
  title     = {Model Predictive Control: Theory, Computation, and Design},
  publisher = {Nob Hill Publishing},
  edition   = {2},
  year      = {2017},
}

@article{Heirung17,
  author  = {Heirung, Tor Aksel N. and Ydstie, Bjarne Erik and Foss, Bjarne A.},
  title   = {Dual Adaptive Model Predictive Control},
  journal = {Automatica},
  volume  = {80},
  pages   = {340--348},
  year    = {2017},
  doi     = {10.1016/j.automatica.2017.01.030}
}

@article{arcari_dual_2020a,
title = {An Approximate Dynamic Programming Approach for Dual Stochastic Model Predictive Control},
journal = {IFAC-PapersOnLine},
volume = {53},
number = {2},
pages = {8105-8111},
year = {2020},
issn = {2405-8963},
author = {Elena Arcari and Lukas Hewing and Melanie N. Zeilinger},
}

@article{TseBarShalom1973,
  author  = {E. Tse and Y. Bar-Shalom},
  title   = {An actively adaptive control for linear systems with random parameters via the dual control approach},
  journal = {IEEE Transactions on Automatic Control},
  volume  = {18},
  number  = {2},
  pages   = {109--117},
  year    = {1973}
}

@article{BarShalomTse1974,
  author  = {Y. Bar-Shalom and E. Tse},
  title   = {Dual effect, certainty equivalence, and separation in stochastic control},
  journal = {IEEE Transactions on Automatic Control},
  volume  = {19},
  number  = {5},
  pages   = {494--500},
  year    = {1974}
}

@article{Mesbah2016,
  author  = {A. Mesbah},
  title   = {Stochastic Model Predictive Control: An Overview and Perspectives for Future Research},
  journal = {IEEE Control Systems Magazine},
  volume  = {36},
  number  = {6},
  pages   = {30--44},
  year    = {2016}
}

@article{mesbah_stochastic_2018,
	title = {Stochastic model predictive control with active uncertainty learning: {A} {Survey} on dual control},
	volume = {45},
	issn = {13675788},
	shorttitle = {Stochastic model predictive control with active uncertainty learning},
	doi = {10.1016/j.arcontrol.2017.11.001},
	urldate = {2023-04-20},
	journal = {Annual Reviews in Control},
	author = {Mesbah, Ali},
	year = {2018},
	pages = {107--117},
}

@article{HUSHARP2024,
author = {Haimin Hu and David Isele and Sangjae Bae and Jaime F Fisac},
title ={Active uncertainty reduction for safe and efficient interaction planning: A shielding-aware dual control approach},
journal = {The International Journal of Robotics Research},
volume = {43},
number = {9},
pages = {1382-1408},
year = {2024},
doi = {10.1177/02783649231215371}
}

@inproceedings{soloperto_dual_2019,
  title={Dual adaptive {M}{P}{C} for output tracking of linear systems},
  author={Soloperto, Raffaele and K{\"o}hler, Johannes and M{\"u}ller, Matthias A and Allg{\"o}wer, Frank},
  booktitle={58th Conference on Decision and Control (CDC)},
  publisher = {IEEE},
  pages={1377--1382},
  year={2019},
}

@inproceedings{baltussen2025,
  title={Dual {M}{P}{C} for Active Learning of Nonparametric Uncertainties}, 
  author={Tren Baltussen and Maurice Heemels and Alexander Katriniok},
  year={2026},
  booktitle = {Accepted for European Control Conference},
  note = {{P}reprint available at arXiv:2511.08542},
}

@book{goodwin1984adaptive,
  title={Adaptive Filtering Prediction and Control},
  author={Goodwin, Graham C. and Sin, Kwai Sang},
  year={1984},
  publisher={Prentice-Hall}
}

@book{cover2006elements,
  title={Elements of Information Theory},
  author={Cover, Thomas M. and Thomas, Joy A.},
  year={2006},
  publisher={Wiley}
}

@book{boyd2004convex,
  title={Convex Optimization},
  author={Boyd, Stephen and Vandenberghe, Lieven},
  year={2004},
  publisher={Cambridge University Press}
}

\end{document}